\newtheorem{thm}{Theorem}[section]
\newtheorem{lem}[thm]{Lemma}
\newtheorem{obs}[thm]{Observation}
\newtheorem{oper}[thm]{Operation}
\def\pf{\noindent{\it Proof.\;\;\:}}
\def\qed{\nopagebreak\hfill{\rule{4pt}{7pt}}
\medbreak}
\DeclareMathOperator{\diam}{diam}
\DeclareMathOperator{\tdi}{tdi}
\DeclareMathOperator{\tw}{tw}
\theoremstyle{definition}
\title[Excluding long paths]{Excluding long paths}
\author[Bin Jia]{Bin Jia}
\thanks{Bin Jia gratefully acknowledges scholarships provided by The University of Melbourne.}
\begin{document}

\maketitle

\begin{center}
Department of Mathematics and Statistics\\
The University of Melbourne\\
Victoria 3010, Australia.\\
Email: jiabinqq@gmail.com \quad Mobile: +61 404639816
\end{center}

\medskip

\noindent {\bf Abstract.}
Ding (1992) proved that for each integer ${m} \geqslant 0$, and every infinite sequence of finite simple graphs $G_1, G_2, \ldots$, if none of these graphs contains a path of length ${m}$ as a subgraph, then there are indices $i < j$ such that $G_i$ is isomorphic to an induced subgraph of $G_j$. We generalise this result to infinite graphs, possibly with parallel edges and loops.


\medskip

\noindent{\textbf{Keywords}}. tree-decomposition, tree-width, tree-diameter, well-quasi-ordering, better-quasi-ordering.

\section{Introduction and main results}
All graphs in this paper are undirected. Unless stated otherwise, a graph may be finite or infinite, and may contain parallel edges and loops. Let $m \geqslant 0$ be an integer. We use $P_m$ to denote a path with $m$ edges (and $m + 1$ vertices).

Robertson and Seymour \cite{GMXX} proved that the finite graphs are well-quasi-ordered by the minor relation. Thomas \cite{Thomas1988cexample} found an example showing that the infinite graphs are not well-quasi-ordered by the minor relation. Later, Thomas~\cite{ThomasRobin1989} proved that the finite or infinite graphs without a given finite planar graph as a minor are well-quasi-ordered (furthermore, better-quasi-ordered) by the minor relation.

A \emph{Robertson chain} of length $m$ is the graph obtained by duplicating each edge of $P_m$. Robertson conjectured in 1980's that the finite graphs without a Robertson chain of length $m$ as a topological minor are well-quasi-ordered by the topological minor relation. This conjecture was proved by Liu \cite{Liu2014}.

By considering the \emph{type} of a finite simple graph, Ding \cite{Ding1992} proved that, for each integer $m \geqslant 0$, the finite simple graphs without $P_m$ as a subgraph are well-quasi-ordered by the induced subgraph relation. Another proof, based on the \emph{tree-depth}, was given by Ne{\v{s}}et{\v{r}}il and Ossona de Mendez \cite{NesetrilOssona2012}. We generalise Ding's theorem to infinite graphs, possibly with parallel edges and loops.

\begin{thm}\label{thm_lpathfreebqo}
Given a finite graph $H$, the graphs (respectively, of bounded multiplicity) without $H$ as a subgraph are better-quasi-ordered by the (respectively, induced) subgraph relation if and only if $H$ is a disjoint union of paths.
\end{thm}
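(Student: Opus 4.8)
The plan is to prove the two implications of the equivalence separately, the forward (``only if'') direction being short. Suppose $H$ is not a disjoint union of paths. Then $H$ either has a vertex of degree at least $3$ or contains a cycle, a loop counting as a cycle of length $1$ and a pair of parallel edges as a cycle of length $2$. In the first case $H$ is not a subgraph of any cycle; in the second, $H$ is disconnected or has a vertex of degree $\geq 3$ — again not a subgraph of any cycle — or else $H$ is a single cycle $C_k$, in which case it fails to be a subgraph of $C_n$ whenever $n\neq k$. Hence all but at most one of the simple cycles $C_3,C_4,C_5,\dots$ contain no subgraph isomorphic to $H$, so $\{C_n : n\geq 3\}$ is an infinite antichain for both the subgraph and the induced-subgraph relation; an infinite antichain precludes well-quasi-ordering, hence better-quasi-ordering, and since the $C_n$ are simple this defeats both assertions at once.

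For the ``if'' direction, write $H$ as a disjoint union of $t$ paths with $e_H$ edges in total and put $M := e_H + 2t - 2$, so that $P_M$ itself contains $H$ as a subgraph (lay the $t$ subpaths consecutively along it, with single-vertex gaps); hence every $H$-subgraph-free graph is $P_M$-free. Since the $H$-subgraph-free graphs are closed downwards under the relevant relation and better-quasi-ordering is inherited by subclasses, it suffices to treat $P_M$-free graphs. The pivotal structural input is that a $P_M$-free graph has tree-depth bounded in terms of $M$: for finite graphs, a depth-first-search tree is an elimination tree whose root-to-leaf branches are honest paths of the graph; for infinite, possibly uncountable, graphs I would argue directly from the absence of long paths that there is an elimination forest of height at most $M$, and it is precisely here that normal-spanning-tree arguments are unavailable and the paper's tree-diameter parameter — a tree-decomposition over a tree of small diameter — is, I expect, the right substitute.

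It then remains to prove that, for each fixed $\mu$, the graphs of tree-depth at most $d$ and multiplicity at most $\mu$ are better-quasi-ordered by the induced-subgraph relation, and that the graphs of tree-depth at most $d$ (with no multiplicity bound) are better-quasi-ordered by the subgraph relation. I would do both by induction on $d$ in the strengthened form \emph{``for every better-quasi-order $Q$, the $Q$-vertex-labelled graphs of tree-depth $\leq d$ and multiplicity $\leq\mu$ are better-quasi-ordered by the label-respecting induced-subgraph relation''} (with the analogue for the subgraph relation, $\mu$ replaced by $\mathbb N$). For $d\leq 1$ a graph is a multiset of vertices each carrying an element of $Q\times\{0,\dots,\mu\}$ (label and loop-count), and Nash-Williams's theorem that forming — possibly infinite — multisets preserves better-quasi-ordering finishes it, cardinalities rather than finite numbers occurring but the cardinals being well-ordered. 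For the step, a connected graph of tree-depth $\leq d$ has an elimination tree with a single root $r$ and deleting $r$ leaves tree-depth $\leq d-1$; recording $r$'s loop-count and $Q$-label and the function $v\mapsto(\text{multiplicity of }rv)$ as a new vertex-label exhibits the graph inside a finite product of a better-quasi-order with the class provided by the induction hypothesis, and one checks the encoding is order-faithful (an embedding of encodings yields an induced-subgraph embedding sending $r$ to $r$). A disconnected graph of tree-depth $\leq d$ is the disjoint union of its connected pieces, hence encoded by the multiset of their encodings, and since domination of such multisets implies the induced-subgraph relation (disjointly embed each source component into a distinct target component — the direction one needs, even though its converse fails) a last use of Nash-Williams closes the induction.

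The difficulty is not this combinatorial skeleton but making every step robust under the passage to arbitrary infinite multigraphs: showing $P_M$-free graphs of unrestricted cardinality carry bounded-height elimination forests (equivalently, bounded tree-diameter) without normal spanning trees; pushing the better-quasi-order operations — finite products and, above all, the multiset operation on \emph{infinite} multisets — through the induction, this being exactly where plain well-quasi-ordering would fail (Rado's example) so that better-quasi-ordering is genuinely needed; and bookkeeping multiplicities so that the labelled encoding reflects the induced-subgraph relation in one case and the subgraph relation in the other. I expect the structure theory of infinite long-path-free graphs to be the single hardest ingredient, and the reason the ``tree-diameter'' parameter is developed.
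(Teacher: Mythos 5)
Your ``only if'' direction is fine and is essentially the paper's argument (an infinite antichain of long simple cycles, which are $H$-free because a cycle longer than $|V(H)|$ contains only disjoint unions of paths as proper subgraphs). The problem is the ``if'' direction: its entire weight rests on the assertion that every (possibly uncountable) graph with no path of length $M$ admits an elimination forest of height bounded in terms of $M$, and you do not prove this -- you explicitly say you ``would argue directly'' and then concede that this is the hardest ingredient and that the paper's tree-diameter parameter is presumably the substitute. That unproved statement is precisely the analogue of the paper's Lemma \ref{lem_tdibounded}, which occupies all of Section \ref{sec_pathGraphs}: the paper first gets $\tw(G)\leqslant m-1$ by the Robertson--Seymour excluded-path theorem plus the compactness theorem for tree-width, then invokes the K{\v{r}}{\'{\i}}{\v{z}}--Thomas $M$-closure to obtain a \emph{linked} tree-decomposition of minimum width, makes it \emph{short} via Operation \ref{oper_reduceDiam} (Lemmas \ref{lem_v01vl1l}--\ref{lem_linkDshort}), and finally bounds the diameter of the decomposition tree by the rotundity counting argument, which uses linkedness to turn many distinct small adhesion sets along a path of the tree into many disjoint paths of $G$ and hence into a long path. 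Nothing in your sketch replaces this chain of arguments, so the infinite case of the ``if'' direction is not established. (Incidentally, your stated reason for the difficulty is off: a $P_M$-free graph is rayless, so by Jung's criterion every connected such graph \emph{does} have a normal spanning tree, necessarily of height less than $M$, which would give the bounded-height elimination forest you want; but you neither invoke nor prove this, so as written the gap stands.)

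Apart from that missing structural lemma, your bqo induction (on tree-depth, encoding a connected graph by its root's data plus the multiset of labelled components of $G-r$, and closing with Nash-Williams's theorems on products and infinite multisets/powersets) is a legitimate variant of the paper's Lemma \ref{lem_Qtwtdbqo}, which instead inducts on the height of the rooted decomposition tree and uses a $p$-colouring of the bags together with a distinguished root set $V_G\subseteq V_r$ to make the piecewise embeddings compatible; your route would need the same kind of bookkeeping (exact multiplicities to the root in the induced-subgraph case, cardinal-valued multiplicities compared by $\leqslant$ in the subgraph case, and a check that domination of the multiset of components really yields a single embedding) but these are fixable details. The reduction from a disjoint union of paths $H$ to a single path $P_M$, and the downward closure argument, are correct and implicit in the paper as well.
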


Let $t \geqslant 1$ be an integer. A \emph{$t$-dipole} is a graph with two vertices and $t$ edges between them. Clearly, a $t$-dipole does not contain $P_2$ as a subgraph. Further, the set of $t$-dipoles, for $t = 1, 2, \ldots$, is well-quasi-ordered by the subgraph relation, but not by the induced subgraph relation.

Our method in dealing with the graphs without $P_m$ as a subgraph is different from the methods of Ding \cite{Ding1992} and Ne{\v{s}}et{\v{r}}il and Ossona de Mendez \cite{NesetrilOssona2012}. Instead of studying the type or the tree-depth of a graph $G$, we prove Theorem \ref{thm_lpathfreebqo} by investigating the tree-decompositions of $G$. A key step is to show that, if $G$ does not contain $P_m$ as a subgraph, then $G$ admits a tree-decomposition which attains the minimum width such that the diameter of the tree for the tree-decomposition is bounded by a function of $m$. Then we prove Theorem \ref{thm_lpathfreebqo} by induction on the diameter.

\section{Terminology}\label{sec_term}
This section presents some necessary definitions and basic results about tree-decompositions and quasi-orderings of graphs.

%

A binary relation on a set is a \emph{quasi-ordering} if it is reflexive and transitive. A quasi-ordering $\leqslant$ on a set $\mathbf{Q}$ is a \emph{well-quasi-ordering} if for every infinite sequence $q_1, q_2, \ldots$ of $\mathbf{Q}$, there are indices $i < j$ such that $q_i \leqslant q_j$. And if this is the case, then $q_i$ and $q_j$ are called a \emph{good pair}, and $\mathbf{Q}$ is \emph{well-quasi-ordered} by $\leqslant$.


Let $G$ be a hypergraph, $T$ be a tree, and $\mathcal{V} := \{V_v \mid v \in V(T)\}$ be a set cover of $V(G)$ indexed by $v \in V(T)$. The pair $(T, \mathcal{V})$ is called a \emph{tree-decomposition} of $G$ if the following two conditions are satisfied:
\begin{itemize}
\item for each hyperedge $e$ of $G$, there exists some $V \in \mathcal{V}$ containing all the vertices of $G$ incident to $e$;
\item for every path $[v_0, \ldots, v_i, \ldots, v_m]$ of $T$, we have $V_{v_0} \cap V_{v_{m}} \subseteq V_{v_i}$.
\end{itemize}

The \emph{width $\tw(T, \mathcal{V})$} of $(T, \mathcal{V})$ is \emph{$\sup\{|V| - 1\mid V \in \mathcal{V}\}$}. The \emph{tree-width $\tw(G)$} of $G$ is the minimum width of a tree-decomposition of $G$. The \emph{tree-diameter $\tdi(G)$} of $G$ is the minimum diameter of $T$ over the tree-decompositions $(T, \mathcal{V})$ of $G$ such that $\tw(T, \mathcal{V}) = \tw(G)$.



Let $[v_0, e_1, v_1, \ldots, v_{m-1}, e_m, v_{m}]$ be a path of $T$. Denote by \emph{$\mathcal{V}_T(v_0, v_{m})$} the set of minimal sets, up to the subset relation, among $V_{v_i}$ and $V_{e_j} := V_{v_{j - 1}} \cap V_{v_j}$ for $i \in \{0, 1, \ldots, {m}\}$ and $j \in [{m}] := \{1, 2, \ldots, {m}\}$. We identify repeated sets in $\mathcal{V}_T(v_0, v_{m})$.

A tree-decomposition $(T, \mathcal{V})$ of $G$ is said to be \emph{linked} if
\begin{itemize}
\item for every pair of nodes $u$ and $v$ of $T$, and subsets $U$ of $V_u$ and $V$ of $V_v$ such that $|U| = |V| =: k$, either $G$ contains $k$ disjoint paths from $U$ to $V$, or there exists some $W \in \mathcal{V}_T(u, v)$ such that $|W| < k$.
\end{itemize}

Kruskal's theorem \cite{Kruskal1960} states that finite trees are well-quasi-ordered by the topological minor relation. Nash-Williams \cite{NashWilliams1965} generalised this theorem and proved that infinite trees are \emph{better-quasi-ordered} by the same relation. Let $\mathcal{A}$ be the set of all finite ascending sequences of nonnegative integers. For $A, B \in \mathcal{A}$, write $A <_{\mathcal{A}} B$ if $A$ is a strict initial subsequence of some $C \in \mathcal{A}$, and by deleting the first term of $C$, we obtain $B$. Let $\mathcal{B}$ be an infinite subset of $\mathcal{A}$, and $\bigcup\mathcal{B}$ be the set of nonnegative integers appearing in some sequence of $\mathcal{B}$. $\mathcal{B}$ is called a \emph{block} if it contains an initial subsequence of every infinite increasing sequence of $\bigcup\mathcal{B}$. Let $\mathbf{Q}$ be a set with a quasi-ordering $\leqslant_{\mathbf{Q}}$. A \emph{$\mathbf{Q}$-pattern} is a function from a block $\mathcal{B}$ into $\mathbf{Q}$. A $\mathbf{Q}$-pattern $\varphi$ is \emph{good} if there exist $A, B \in \mathcal{B} \subseteq \mathcal{A}$ such that $A <_{\mathcal{A}} B$ and $\varphi(A) \leqslant_{\mathbf{Q}} \varphi(B)$. $\mathbf{Q}$ is said to be \emph{better-quasi-ordered} by $\leqslant_{\mathbf{Q}}$ if every $\mathbf{Q}$-pattern is good. For example, the set of nonnegative integers is better-quasi-ordered by the natural ordering. It follows from the definitions that a better-quasi-ordering is a well-quasi-ordering. And $\mathbf{Q}$ is better-quasi-ordered if and only if each subset of $\mathbf{Q}$ is better-quasi-ordered.

For an integer $j \geqslant 1$, define a quasi-ordering on $\mathbf{Q}^j$ as follows: $(q_1, \ldots, q_j) \leqslant_{\mathbf{Q}^j} (q'_1, \ldots, q'_j)$ if $q_i \leqslant_{\mathbf{Q}} q'_i$ for every $i \in [j]$. The following lemma follows from the Galvin-Prikry theorem  \cite{GalvinPriskry1973} (see also \cite[(3.11)]{ThomasRobin1989} and \cite[Lemma 3]{KuhnDaniela2001}).



\begin{lem}\label{lem_coverbqo}
Let $k \geqslant 1$ be an integer, and $\mathbf{Q} = \bigcup_{i = 1}^k\mathbf{Q}_i$ be a quasi-ordered set. Then the following statements are equivalent:
\begin{itemize}
\item[{\bf (1)}] $\mathbf{Q}$ is better-quasi-ordered.
\item[{\bf (2)}] $\mathbf{Q}_i$ is better-quasi-ordered for every $i \in [k]$.
\item[{\bf (3)}] $\mathbf{Q}^j$ is better-quasi-ordered for every integer $j \geqslant 1$.
\end{itemize}
\end{lem}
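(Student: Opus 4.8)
The plan is to prove the two equivalences $(1)\Leftrightarrow(2)$ and $(1)\Leftrightarrow(3)$; together they give the lemma. Both implications $(1)\Rightarrow(2)$ and $(3)\Rightarrow(1)$ are immediate: each $\mathbf{Q}_i$ is a subset of $\mathbf{Q}$ and a subset of a better-quasi-ordered set is better-quasi-ordered (Section~\ref{sec_term}), while $(3)$ with $j=1$ is exactly $(1)$. So the real work is in $(2)\Rightarrow(1)$ and $(1)\Rightarrow(3)$, and the tool for both is the Galvin-Prikry theorem \cite{GalvinPriskry1973} in the form that is standard in better-quasi-ordering theory: \emph{if a block is partitioned into finitely many parts, then some sub-block is contained in one of the parts} (see \cite[(3.11)]{ThomasRobin1989} and \cite[Lemma~3]{KuhnDaniela2001}).

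For $(2)\Rightarrow(1)$ I would induct on $k$. The case $k=1$ is trivial, and writing $\mathbf{Q}=\mathbf{Q}_1\cup(\mathbf{Q}_2\cup\cdots\cup\mathbf{Q}_k)$ reduces the inductive step to $k=2$. So let $\varphi\colon\mathcal{B}\to\mathbf{Q}_1\cup\mathbf{Q}_2$ be a $\mathbf{Q}$-pattern and partition the block $\mathcal{B}$ into $\varphi^{-1}(\mathbf{Q}_1)$ and $\mathcal{B}\setminus\varphi^{-1}(\mathbf{Q}_1)$; by the principle above there is a sub-block $\mathcal{B}'\subseteq\mathcal{B}$ lying inside one of these two parts. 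Then $\varphi|_{\mathcal{B}'}$ is a $\mathbf{Q}_1$-pattern or a $\mathbf{Q}_2$-pattern, hence good by hypothesis, and any witnessing pair $A<_{\mathcal{A}}B$ in $\mathcal{B}'$ witnesses that $\varphi$ is good, since $\mathcal{B}'\subseteq\mathcal{B}$ and $\leqslant_{\mathbf{Q}}$ extends each $\leqslant_{\mathbf{Q}_i}$.

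For $(1)\Rightarrow(3)$ I would induct on $j$, the case $j=1$ being $(1)$. Since $\mathbf{Q}^{j+1}$ is order-isomorphic to $\mathbf{Q}\times\mathbf{Q}^{j}$, and $\mathbf{Q}^{j}$ is better-quasi-ordered by the inductive hypothesis, it suffices to show that the product $\mathbf{Q}\times\mathbf{R}$ of two better-quasi-ordered sets is better-quasi-ordered. Let $\varphi=(\varphi_1,\varphi_2)\colon\mathcal{B}\to\mathbf{Q}\times\mathbf{R}$ be a pattern. The key preliminary step is to pass to a sub-block on which $\varphi_1$ is \emph{perfect}, i.e.\ a sub-block $\mathcal{B}'\subseteq\mathcal{B}$ such that $\varphi_1(A)\leqslant_{\mathbf{Q}}\varphi_1(B)$ for \emph{every} pair $A<_{\mathcal{A}}B$ in $\mathcal{B}'$: one colours the relevant pairs of $\mathcal{B}$ (a set that is again governed by a block) according to whether or not $\varphi_1(A)\leqslant_{\mathbf{Q}}\varphi_1(B)$, and observes that the ``bad'' class can contain no sub-block, since such a sub-block would exhibit $\varphi_1$ as a non-good $\mathbf{Q}$-pattern, contradicting that $\mathbf{Q}$ is better-quasi-ordered; the Galvin-Prikry principle then yields the desired $\mathcal{B}'$. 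Now $\varphi_2|_{\mathcal{B}'}$ is an $\mathbf{R}$-pattern, hence good, so there is $A<_{\mathcal{A}}B$ in $\mathcal{B}'$ with $\varphi_2(A)\leqslant_{\mathbf{R}}\varphi_2(B)$; perfection of $\varphi_1$ gives $\varphi_1(A)\leqslant_{\mathbf{Q}}\varphi_1(B)$ for the same pair, so $\varphi(A)\leqslant_{\mathbf{Q}\times\mathbf{R}}\varphi(B)$ and $\varphi$ is good.

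None of this goes beyond routine better-quasi-ordering bookkeeping, and the only points that need care are (a) the statement that a finitely-partitioned block has a sub-block inside one part, and (b), in $(1)\Rightarrow(3)$, the correct formulation of the ``pairs'' of a block, so that a monochromatic sub-block for the $\varphi_1$-colouring genuinely corresponds to a sub-block of $\mathcal{B}$ on which $\varphi_1$ is monotone along $<_{\mathcal{A}}$. Both are classical consequences of the Galvin-Prikry theorem recorded in the references cited above, so I would simply invoke them; the perfection step (b) is the one place where one must be slightly attentive, and I expect it to be the main (minor) obstacle.
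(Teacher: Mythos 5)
Your proposal is correct and rests on exactly the tool the paper invokes: the paper gives no proof of Lemma~\ref{lem_coverbqo}, deriving it directly from the Galvin--Prikry theorem via \cite[(3.11)]{ThomasRobin1989} and \cite[Lemma 3]{KuhnDaniela2001}, and your argument simply writes out the standard partition-into-sub-blocks and perfect-sub-block (product) steps that those references encapsulate. So you have filled in, correctly, the classical proof the paper chose to cite rather than reproduce.
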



Define a quasi-ordering $\leqslant$ on the powerset of $\mathbf{Q}$ as follows. For $S_1, S_2 \subseteq \mathbf{Q}$, write $S_1 \leqslant S_2$ if there is an injection $\varphi$ from $S_1$ to $S_2$ such that $q \leqslant_{\mathbf{Q}} \varphi(q)$ for every $q \in S_1$.

Let $\mathbf{Q}$ be a set with a quasi-ordering $\leqslant_\mathbf{Q}$.
Let $\mathcal{S}$ be a set of sequences whose elements are from $\mathbf{Q}$. For $S_1 := (q_1, q_2, \ldots) \in \mathcal{S}$ and $S_2 \in \mathcal{S}$, we say $S_1 \leqslant_\mathcal{S} S_2$ if there is a subsequence $S_3 := (p_1, p_2, \ldots)$ of $S_2$ such that $S_1$ and $S_3$ have the same length, and that $q_i \leqslant_\mathbf{Q} p_i$ for every index $i$ used in $S_1$. The following results are due to Nash-Williams \cite{NashWilliams1968}.

\begin{lem}[\cite{NashWilliams1968}]\label{lem_subseqbqo}
Every finite quasi-ordered set is better-quasi-ordered. And each better-quasi-ordering is a well-quasi-ordering. Moreover, a quasi-ordered set
$\mathbf{Q}$ is better-quasi-ordered if and only if the powerset of $\mathbf{Q}$ is better-quasi-ordered if and only if every set of sequences whose elements are from $\mathbf{Q}$ is better-quasi-ordered.
\end{lem}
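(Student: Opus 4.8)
The statement packages three elementary facts with two substantial ones; I would prove the elementary facts by hand and reduce the substantial ones to the Nash--Williams bqo preservation theorems.

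\emph{The elementary facts.} That a finite quasi-ordered set $\mathbf{Q}$ is better-quasi-ordered follows from Lemma~\ref{lem_coverbqo} applied to the partition of $\mathbf{Q}$ into its one-element subsets, once we know that a one-element set $\{q\}$ is better-quasi-ordered; and for the latter it is enough to show that every block $\mathcal{B}$ contains members $A <_{\mathcal{A}} B$, since any pattern on $\{q\}$ is constant and hence good by reflexivity. To see this, pass to the set $\mathcal{B}_{0}$ of members of $\mathcal{B}$ having no proper initial subsequence in $\mathcal{B}$; this is again a block and is \emph{thin}, so each infinite increasing sequence from $\bigcup\mathcal{B}_{0}$ has a unique initial subsequence in $\mathcal{B}_{0}$. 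Let $n_{0} < n_{1} < \cdots$ enumerate $\bigcup\mathcal{B}_{0}$ and let $s_{i} \in \mathcal{B}_{0}$ be the one sitting below $(n_{i}, n_{i+1}, \ldots)$; checking the definition of $<_{\mathcal{A}}$ shows that $s_{i} <_{\mathcal{A}} s_{i+1}$ whenever $|s_{i}| \leqslant |s_{i+1}|$, and the positive integers $|s_{0}|, |s_{1}|, \ldots$ cannot decrease at every step. That a better-quasi-ordering is a well-quasi-ordering is immediate: for a sequence $q_{1}, q_{2}, \ldots$ in a better-quasi-ordered $\mathbf{Q}$, the block $\{(n) : n \geqslant 1\}$ of one-term sequences carries the pattern $(n) \mapsto q_{n}$, and a good pair for it is precisely a pair of indices $i < j$ with $q_{i} \leqslant_{\mathbf{Q}} q_{j}$. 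Finally, the maps $q \mapsto \{q\}$ and $q \mapsto (q)$ are order-embeddings of the quasi-ordered set $\mathbf{Q}$ into, respectively, its powerset and the set of all sequences from $\mathbf{Q}$ (a direct check against the two orderings), and composing a pattern with such an embedding shows that any quasi-ordered set order-embeddable into a better-quasi-ordered set is itself better-quasi-ordered; this gives the ``if'' halves of both equivalences, and also reduces the ``only if'' half for an arbitrary set $\mathcal{S}$ of sequences to the case of the set of \emph{all} sequences from $\mathbf{Q}$, since $\mathcal{S}$ then inherits a better-quasi-ordering as a subset.

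\emph{The substantial facts.} It remains to show that if $\mathbf{Q}$ is better-quasi-ordered then so are (i) its powerset under the injection ordering and (ii) the set of all sequences from $\mathbf{Q}$ under $\leqslant_{\mathcal{S}}$. These are the Nash--Williams preservation theorems, and I would prove them by the \emph{minimal bad array} method. For (ii): were it false, then, after restricting to a thin block as above, there would be a bad array $f$ from a block $\mathcal{B}$ into the sequences from $\mathbf{Q}$ --- one with no $A <_{\mathcal{A}} B$ satisfying $f(A) \leqslant_{\mathcal{S}} f(B)$ --- and no value $f(A)$ could be the empty sequence, which lies below everything. One then invokes the minimal bad array lemma to choose $f$ minimal against a well-founded auxiliary ordering (the length ordering for finite sequences; the infinite case requires the transfinite refinement of the method), decomposes each $f(A)$ into its first term $q_{A} \in \mathbf{Q}$ and its tail $t_{A}$, notes that the heads $A \mapsto q_{A}$ form a $\mathbf{Q}$-pattern and are hence good, and concludes from minimality --- applied to the array of tails re-indexed over the sub-block furnished by the Nash--Williams ``forerunner'' construction --- that the tails array is good too. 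Splicing the two good pairs together produces $A <_{\mathcal{A}} B$ with $f(A) \leqslant_{\mathcal{S}} f(B)$, a contradiction. Part (i) runs identically, with ``subset'' in place of ``sequence'' and ``injection'' in place of ``subsequence''; the only extra point is that subsets can be uncountable, which the transfinite form of the technique absorbs. The real difficulty is concentrated here: formulating the minimal bad array lemma precisely and carrying out the forerunner bookkeeping so that goodness of the tails (respectively, rests) array transfers to $f$ itself; everything else is routine. A full treatment is in Nash--Williams~\cite{NashWilliams1968}.
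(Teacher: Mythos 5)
The paper does not prove this lemma at all — it is stated as a quoted result of Nash--Williams \cite{NashWilliams1968} — so there is no internal argument to compare against, and your proposal is in effect the same approach: the two substantive preservation theorems (powerset and sequences) are precisely what you, like the paper, delegate to Nash--Williams, with the minimal-bad-array/forerunner machinery only sketched. The elementary parts you do prove check out against the paper's definitions: the thin-block $\mathcal{B}_0$ and the length argument correctly yield a pair $A <_{\mathcal{A}} B$ in any block (modulo the trivial edge case of the empty sequence, which is handled by noting $\emptyset <_{\mathcal{A}} \emptyset$), the block of one-term sequences gives better-quasi-ordering implies well-quasi-ordering, and goodness transfers along the embeddings $q \mapsto \{q\}$ and $q \mapsto (q)$ exactly as you say.
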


\section{Graphs without $P_m$ as a subgraph}\label{sec_pathGraphs}
In this section, we show that a graph without a given finite path as a subgraph has bounded tree-diameter. We achieve this by modifying a given tree-decomposition $(T, \mathcal{V})$ of $G$ such that $\diam(T)$ is reduced but $\tw(T, \mathcal{V})$ remain unchanged.

One of the easiest ways to reduce $\diam(T)$ is to delete repeated sets in $\mathcal{V}$. Let $(T, \mathcal{V})$ be a linked tree-decomposition of $G$. Suppose there are different nodes $u, v$ of $T$ such that $V_u = V_v$. Let $\mathcal{V}_1$ be obtained from $\mathcal{V}$ by deleting the set $V_v \in \mathcal{V}$ indexed by $v \in V(T)$. Let $T_1$ be obtained from $T$ by contracting an edge between $v$ and some $w \in V(T)$ to a node $w$ of $T$. Then $(T_1, \mathcal{V}_1)$ is a linked-tree decomposition of $G$ such that $\diam(T_1) \leqslant \diam(T)$ and $\tw(T_1, \mathcal{V}_1) = \tw(T, \mathcal{V})$.

We emphasise that, to reduce $\diam(T)$, it is not enough to just remove repeated sets in $\mathcal{V}$. We also must deal with repeated sets in $\{V_e \mid e \in E(T)\}$. Let $m \geqslant 3$ be an integer, and $G$ be a star with center $0$ and leaves $1, 2, \ldots, m$. Let $T := [v_1, \ldots, v_m]$ be a path, $V_{v_i} := \{0, i\}$ for $i \in [m]$, and $\mathcal{V} := \{V_{v_i}\mid i \in [m]\}$. Then $(T, \mathcal{V})$ is a linked tree-decomposition of $G$ with width $1$ and without repeated sets in $\mathcal{V}$. However, $\diam(T) = m$ is too large for $G$. In fact, $\tdi(G) = 2$. To see this, let $T^*$ be a star with center $v_m$, and with leaves $v_1, \ldots, v_{m - 1}$. Then $(T^*, \mathcal{V})$ is a linked tree-decomposition of $G$ with width $1$ such that $\diam(T^*) = 2$. Note that $\mathcal{V}$ is not changed. And $T^*$ can be obtained from $T$ by deleting edges between $v_i$ and $v_{i + 1}$, and adding an extra edge between $v_m$ and $v_i$ for $i \in [m - 1]$.

The operation above can be extended to deal with a tree-decomposition $(T, \mathcal{V})$ such that $\{V_e \mid e \in E(T)\}$ contains repeated sets. To do this, we need the following lemma.

\begin{lem}\label{lem_v01vl1l}
Let ${m} \geqslant 1$ be an integer, $(T, \mathcal{V})$ be a tree-decomposition of a graph $G$, and $[v_0, \ldots, v_{m}]$ be a path of $T$. Let $U \subseteq V(G)$. Then $U = V_{v_0}\cap V_{v_1} = V_{v_{{m} - 1}}\cap V_{v_{m}}$ if and only if $U = V_{v_i} \cap V_{v_0} = V_{v_i} \cap V_{v_{m}}$ for all $i \in [{m} - 1]$.
\end{lem}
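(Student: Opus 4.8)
The plan is to reduce everything to the single property of a tree-decomposition that controls intersections along a path, namely that for every path $[u_0,\ldots,u_k]$ of $T$ one has $V_{u_0}\cap V_{u_k}\subseteq V_{u_j}$ for each $j$. First I would feed this axiom the various sub-paths of $[v_0,\ldots,v_m]$: writing $A_i:=V_{v_0}\cap V_{v_i}$ and $B_i:=V_{v_i}\cap V_{v_m}$, applying the axiom to $[v_0,\ldots,v_i,\ldots,v_j]$ gives $V_{v_0}\cap V_{v_j}\subseteq V_{v_i}$, hence $A_j\subseteq A_i$ whenever $i\leqslant j$; symmetrically, applying it to $[v_i,\ldots,v_j,\ldots,v_m]$ gives $B_i\subseteq B_j$ whenever $i\leqslant j$. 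This produces two nested chains $A_1\supseteq A_2\supseteq\cdots\supseteq A_m$ and $B_0\subseteq B_1\subseteq\cdots\subseteq B_{m-1}$ sharing a common end, since $A_m=V_{v_0}\cap V_{v_m}=B_0$. (One may as well take $m\geqslant 2$ here, the case $m=1$ carrying no interesting content.)

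Given this, the ``if'' direction falls out by specialising the hypothesis to $i=1$, which yields $U=V_{v_1}\cap V_{v_0}=V_{v_0}\cap V_{v_1}$, and to $i=m-1$, which yields $U=V_{v_{m-1}}\cap V_{v_m}$. For the ``only if'' direction I would assume $U=A_1=B_{m-1}$ and first identify the common end of the chains with $U$: from $U=A_1\subseteq V_{v_0}$ and $U=B_{m-1}\subseteq V_{v_m}$ one gets $U\subseteq V_{v_0}\cap V_{v_m}=A_m$, while $A_m\subseteq A_1=U$ along the first chain, so $A_m=B_0=U$. Then every $A_i$ with $i\in[m-1]$ is squeezed as $U=A_1\supseteq A_i\supseteq A_m=U$, and every $B_i$ with $i\in[m-1]$ is squeezed as $U=B_0\subseteq B_i\subseteq B_{m-1}=U$; hence $V_{v_i}\cap V_{v_0}=A_i=U=B_i=V_{v_i}\cap V_{v_m}$ for all $i\in[m-1]$, as required.

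I do not anticipate a genuine obstacle here: the entire argument is a brief manipulation of set inclusions driven by the tree-decomposition axiom. The only care needed is bookkeeping --- checking that the right sub-path of $[v_0,\ldots,v_m]$ is supplied to the axiom at each step so that the two chains are oriented correctly, and watching the boundary indices $i=1$ and $i=m-1$ (and the degenerate value $m=1$).
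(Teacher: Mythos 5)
Your proposal is correct and follows essentially the same route as the paper: both directions reduce to the tree-decomposition axiom applied to sub-paths of $[v_0,\ldots,v_m]$, and your squeeze $U = A_1 \supseteq A_i \supseteq A_m \supseteq U$ (and its mirror for the $B_i$) is just a chain-notation rephrasing of the paper's inclusions $U \subseteq V_{v_0}\cap V_{v_m} \subseteq V_{v_0}\cap V_{v_i} \subseteq V_{v_0}\cap V_{v_1} = U$. Your explicit remark about restricting to $m \geqslant 2$ is a harmless (indeed slightly more careful) bookkeeping point that the paper passes over silently.
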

\pf $(\Leftarrow)$ Let $i = 1$, we have $U = V_{v_0}\cap V_{v_1}$. Let $i = m - 1$, we have $U = V_{v_{{m} - 1}}\cap V_{v_{m}}$. So $U = V_{v_0}\cap V_{v_1} = V_{v_{{m} - 1}}\cap V_{v_{m}}$.

$(\Rightarrow)$ Since $U = V_{v_0}\cap V_{v_1} = V_{v_{{m} - 1}}\cap V_{v_{m}}$, we have $U = V_{v_0}\cap V_{v_1} \cap V_{v_{{m} - 1}}\cap V_{v_{m}} \subseteq V_{v_0} \cap V_{v_{m}}$. Since $1 \leqslant i \leqslant m - 1$, by the definition of a tree-decomposition, $V_{v_0} \cap V_{v_{m}} \subseteq V_{v_i}$, and $V_{v_0} \cap V_{v_i} \subseteq V_{v_1}$. So $V_{v_0} \cap V_{v_{m}} \subseteq V_{v_0} \cap V_{v_i} \subseteq V_{v_0} \cap V_{v_1} = U$. Thus $U = V_{v_0}\cap V_{v_1} = V_{v_0} \cap V_{v_i} = V_{v_0} \cap V_{v_{m}}$. Symmetrically, $U = V_{v_i} \cap V_{v_{m}}$, and hence $U = V_{v_i} \cap V_{v_0} = V_{v_i} \cap V_{v_{m}}$.
\qed

We list the operation that can be used to reduce $\diam(T)$ for $(T, \mathcal{V})$.
\begin{oper}\label{oper_reduceDiam}
Let $(T, \mathcal{V})$ be a tree-decomposition of a finite graph $G$ such that $T$ is a finite tree. Let $U \subseteq V(G)$ such that $E_U := \{e \in E(T) \mid V_e = U\}$ is not empty. Let $T_U$ be the minimal subtree of $T$ containing $E_U$, and $u$ be a center of $T_U$. For each $e \in E_U$ with end vertices $v, w \in V(T) \setminus \{u\}$ such that $u$ is closer to $v$ than to $w$, delete $e$ and add an extra edge between $w$ and $u$.
\end{oper}

Let $T'$ be obtained from $T$ by applying Operation \ref{oper_reduceDiam} to a subset $U$ of $V(G)$. Let ${E'}_U := E(T')\setminus E(T)$. In Figure \ref{F:ExLoP-reduceDiameterT}, $\diam(T) = 6$, and the bold edges represent the edges of $E_U$. During the operation, the bold edge incident to $u$ does not change. Other bold edges are deleted. The curve edges in $T'$ represent the edges in ${E'}_U$. Note that $\diam(T') = 4$, less than the diameter of $T$.

\begin{figure}[!h]
\centering
\includegraphics{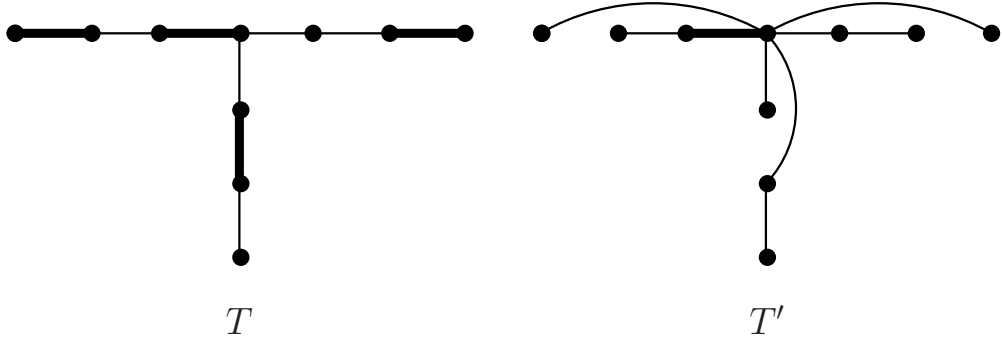}
\caption[Reducing the diameter of the tree for a tree-decomposition]{Reducing the diameter of the tree for a tree-decomposition}
\label{F:ExLoP-reduceDiameterT}
\end{figure}

Our next lemma is useful in proving that $(T', \mathcal{V})$ is a tree-decomposition of $G$.
\begin{lem}\label{lem_treeMiniParts}
Let $(T, \mathcal{V})$ be a tree-decomposition of a finite graph $G$ such that $T$ is a finite tree. Then $T'$ is a finite tree. And  for every pair of $x, y \in V(T) = V(T')$, we have $\mathcal{V}_T(x, y) = \mathcal{V}_{T'}(x, y)$.
\end{lem}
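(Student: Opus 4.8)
The plan is to prove the two assertions in turn. That $T'$ is a finite tree I would settle by counting and connectivity: $V(T') = V(T)$, every edge of $E_U$ incident with $u$ is retained, and every other edge of $E_U$ is deleted and replaced by exactly one new edge, so $|E(T')| = |E(T)| = |V(T)| - 1$; it therefore suffices to check that $T'$ is connected. For this I would show that every node $z$ is joined to $u$ in $T'$ by walking along the path of $T$ from $z$ towards $u$ using edges outside $E_U$, and, the first time this walk would cross an edge $e = vw \in E_U$ with $v$ the endpoint nearer $u$, jumping instead along the replacement edge $uw$ of $T'$ (or along $e$ itself, if $e$ was already incident with $u$), landing at $u$. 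One also checks that no loops or parallel edges arise: distinct edges of $E_U$ not incident with $u$ have distinct far endpoints $w$, and each such $w$ is non-adjacent to $u$ in $T$.

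Before comparing $\mathcal{V}_T(x,y)$ with $\mathcal{V}_{T'}(x,y)$, I would record two facts about bags. First, $U \subseteq V_z$ for every $z \in V(T_U)$ and $U \subseteq V_{e'}$ for every $e' \in E(T_U)$; in particular $U \subseteq V_u$. Indeed, by minimality of $T_U$ every leaf edge of $T_U$ lies in $E_U$, so every vertex (resp.\ edge) of $T_U$ lies on a path of $T$ both of whose end edges belong to $E_U$ (the cases where $z$ or $e'$ is itself on such an edge being trivial); applying Lemma \ref{lem_v01vl1l} to such a path, whose two end-edge bags equal $U$, puts $U$ inside every bag along it. Second, each new edge $uw$ of $T'$, replacing $e = vw \in E_U$ with $v$ nearer $u$, again has bag $U$: the $T$-path from $u$ to $w$ runs through $v$, so $V_u \cap V_w \subseteq V_v \cap V_w = U$, whereas $U \subseteq V_u \cap V_w$ by the first fact; a retained edge of $E_U$ of course keeps bag $U$.

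For the equality of the two families I would use that a family of sets determines its minimal members up to the equivalence ``each set of one family contains some set of the other, and vice versa''. So it suffices to prove this mutual domination between the bags occurring along $P_T(x,y)$ and the bags occurring along $P_{T'}(x,y)$. If $P_T(x,y)$ uses no edge of $E_U$, it survives in $T'$ and equals $P_{T'}(x,y)$, so assume it crosses some $e \in E_U \subseteq E(T_U)$. Then I would establish: (a) $P_{T'}(x,y)$ passes through $u$, and one of its two edges at $u$ is a new edge or a retained edge of $E_U$, hence has bag $U$; (b) $\mathrm{LCA}_T(x,y)$ lies in $V(T_U)$ (because $u$ and both ends of $e$ lie in $V(T_U)$ and $T_U$ is convex in $T$), and consequently every vertex and every edge that lies on one of the two paths but not the other either lies in $T_U$, for vertices and retained edges, or is a new edge, so that in all cases its bag contains $U$; (c) the bag $U$ itself occurs on $P_T(x,y)$, namely as the bag of $e$, and on $P_{T'}(x,y)$ by (a). Together, every bag occurring on either path is either already on the other path or contains $U$, which is on the other path, which is exactly the mutual domination wanted, so $\mathcal{V}_T(x,y) = \mathcal{V}_{T'}(x,y)$.

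The real work is in the last paragraph, and that is where I expect the main obstacle. One must describe precisely how $P_{T'}(x,y)$ relates to $P_T(x,y)$ --- it follows $P_T(x,y)$ from $x$ up to the node just before the first $E_U$-edge met from $x$, then takes a single edge of bag $U$ to $u$, and proceeds symmetrically down to $y$ --- and then check that the portion of $P_T(x,y)$ that is short-cut away, together with $u$ and its new incident edges on the $T'$-side, contributes nothing beyond $U$ to the minimal bags. This forces a modest case split: whether or not $u$ lies on $P_T(x,y)$, which side of the crossed edge each of $x, y$ falls on, and whether the path from $x$ (or $y$) to $u$ meets $E_U$ above or below $\mathrm{LCA}_T(x,y)$. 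The two inputs that make the cases uniform are that every bag of $T_U$ contains $U$, and Lemma \ref{lem_v01vl1l}, which prevents any bag on the short-cut stretch from being strictly smaller than $U$ where it meets an endpoint; so the difficulty is bookkeeping rather than any new idea.
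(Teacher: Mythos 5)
Your proposal is correct and takes essentially the same route as the paper's proof: both argue that the $x$--$y$ paths in $T$ and $T'$ share every edge outside $T_U$ (such edges being bridges of $T \cup T'$), that every bag met inside $T_U$ or on a new edge contains $U$ while $U$ itself occurs as an edge bag on both paths, and conclude that the two families have the same minimal sets. If anything you are more explicit than the paper (e.g.\ using Lemma \ref{lem_v01vl1l} to show all bags of $T_U$ contain $U$ and that each new edge has bag exactly $U$), so the residual case-checking you flag is indeed only routine bookkeeping.
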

\pf If $E_U = \emptyset$ or $\diam(T_U) \leqslant 2$, then $T' = T$, and the lemma follows trivially. Now assume that $\diam(T_U) \geqslant 3$.
By Operation \ref{oper_reduceDiam}, we have that $T'$ is connected, $V(T') = V(T)$ and $|E(T')| = |E(T)|$. So $T'$ is a finite tree.

Let $P$ and $Q$ be paths from $x$ to $y$ in $T$ and $T'$ respectively. By Operation \ref{oper_reduceDiam}, we have that $E(P) \cap E_U = \emptyset$ if and only if $E(Q) \cap {E'}_U = \emptyset$. And if this is the case, then $P = Q$ and there is nothing to show. Now suppose that $E(P) \cap E_U \neq \emptyset$.

For each $f \in E(P) \cup E(Q)$, there are three cases: First, $f \in E_U \cup {E'}_U$ and $V_f = U$. Second, $V_f \neq U$, and there are two edges $e, e' \in E_U$ such that $f$ is on the path of $T$ between $e$ and $e'$. In this case, since $(\mathcal{V}, T)$ is a tree-decomposition of $G$, we have that $U = V_{e} \cap V_{e'} \subset V_f$.

Last, $f \notin E_U \cup {E'}_U$ and $f$ is not between two edges of $E_U$ in $T$. In this situation, assume for a contradiction that $f$ is in a cycle of $T \cup T'$. By Operation \ref{oper_reduceDiam}, $f$ is between $u$ and an edge $e \in E_U$ in $T$. Note that $u$ is a center of $T_U$. So there is another edge $e' \in E_U$ such that $u$ is on the path of $T$ from $e$ to $e'$, a contradiction. Thus $f$ is a bridge of $T \cup T'$. So $f \in E(P) \cap E(Q)$.

By the analysis above, both $\mathcal{V}_T(x, y)$ and $\mathcal{V}_{T'}(x, y)$ are $$\min\{U, V_f \mid f \in E(P) \cap E(Q)\}.$$
\qed

A tree-decomposition $(T, \mathcal{V})$ is \emph{short} if for every pair of different $e, f \in E(T)$, if $V_e = V_f$, then $e$ and $f$ are incident in $T$.
Let $T^*$ be obtained from $T$ by applying Operation \ref{oper_reduceDiam} to each $U \subseteq V(G)$.
In the following, we verify that $(T^*, \mathcal{V})$ is a short tree-decomposition.
\begin{lem}\label{lem_reducelength}
Let $(T, \mathcal{V})$ be a tree-decomposition of a finite graph $G$ such that $T$ is a finite tree. Then all the following statements hold:
\begin{itemize}
\item[{\bf (1)}] $T^*$ is a finite tree such that $\diam(T^*) \leqslant \diam(T)$.
\item[{\bf (2)}] $(T^*, \mathcal{V})$ is a tree-decomposition of $G$ such that $\tw(T^*, \mathcal{V}) = \tw(T, \mathcal{V})$.
\item[{\bf (3)}] $(T^*, \mathcal{V})$ is linked if and only if $(T, \mathcal{V})$ is linked.
\item[{\bf (4)}] For $e, f \in E(T^*)$, if $V_e = V_f$, then $e$ and $f$ are incident in $T^*$.
\end{itemize}
\end{lem}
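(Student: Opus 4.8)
The plan is to view $T^*$ as the result of a finite sequence of applications of Operation~\ref{oper_reduceDiam}, one for each of the finitely many $U \subseteq V(G)$ with $E_U \neq \emptyset$ (finitely many since $G$ is finite), to analyse a single application $(T,\mathcal{V}) \mapsto (T',\mathcal{V})$, and then to iterate; statement {\bf (4)} will additionally require tracking how a later application interferes with an earlier one. Since $\mathcal{V}$ is never altered, the hyperedge-covering condition is preserved, $\tw(T^*,\mathcal{V}) = \tw(T,\mathcal{V})$, and the ``linked'' property becomes invariant once we know $\mathcal{V}_T(u',v') = \mathcal{V}_{T'}(u',v')$ for all nodes $u',v'$, which is part of Lemma~\ref{lem_treeMiniParts}. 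I would first record one structural fact: the leaf edges of $T_U$ lie in $E_U$ (otherwise deleting such a leaf contradicts the minimality of $T_U$), so a diametral path of $T_U$ has both end-edges in $E_U$, and since $u$ is a center of $T_U$ it lies on this path, between the inner endpoints $\alpha',\beta'$ of those end-edges. Then for any $e = vw \in E_U$ whose replacement edge $wu$ is added, $v$ lies on the $T$-path from $w$ to $u$, so $V_w \cap V_u \subseteq V_v$ and hence $V_w \cap V_u \subseteq V_v \cap V_w = V_e = U$; conversely $U = V_e \subseteq V_w$, and $U \subseteq V_{\alpha'} \cap V_{\beta'} \subseteq V_u$. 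Thus the new edge $wu$ again has bag $U$. Since no vertex-bag changes, after applying Operation~\ref{oper_reduceDiam} to $U$ the edges with bag $U$ are exactly the surviving original $E_U$-edges incident to $u$ together with the new edges $wu$ --- all incident to $u$ --- while for every $U' \neq U$ the set of edges with bag $U'$ is unchanged; moreover $\diam(T_U)$ has become $\leqslant 2$, so a repeated application for the same $U$ does nothing (by the proof of Lemma~\ref{lem_treeMiniParts}).

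Granting this, {\bf (2)} and {\bf (3)} for a single application follow quickly. For the path condition of $(T',\mathcal{V})$: in $T$ every vertex-bag and every edge-bag on the path from $x$ to $y$ contains $V_x \cap V_y$, so $V_x \cap V_y \subseteq W$ for every $W \in \mathcal{V}_T(x,y)$; by Lemma~\ref{lem_treeMiniParts}, $\mathcal{V}_T(x,y) = \mathcal{V}_{T'}(x,y)$, and every vertex-bag $V_z$ on the $T'$-path from $x$ to $y$ contains some member of $\mathcal{V}_{T'}(x,y)$, whence $V_x \cap V_y \subseteq W \subseteq V_z$. For {\bf (3)}, the ``linked'' property is phrased solely in terms of $\mathcal{V}$, the sets $\mathcal{V}_T(u',v')$, and paths in $G$, none of which changes. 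Iterating over all relevant $U$ gives {\bf (2)}, {\bf (3)}, and (using Lemma~\ref{lem_treeMiniParts}) that $T^*$ is a finite tree with $V(T^*) = V(T)$, i.e.\ all of {\bf (1)} except the diameter bound.

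For the diameter bound I would prove $\diam(T') \leqslant \diam(T)$ for a single application and iterate. Fix a diametral path $Q$ of $T'$. Every newly added edge is incident to $u$, so $Q$ uses at most two of them, with $u$ interior to $Q$ if it uses two; if it uses exactly one, say $wu$, then $Q = Q_1 \cdot wu \cdot Q_2$, where $Q_1$ (ending at $w$) and $Q_2$ (starting at $u$) use only edges surviving from $T$, so $Q_1$ lies inside the component of $w$ in $T - E_U$ and $Q_2$ inside the component of $u$. Replacing $wu$ by the $T$-path $[w,v,\dots,u]$ (of length $\geqslant 2$), and using the centrality of $u$ in $T_U$ to avoid repeated vertices --- or, when $Q_2$ doubles back toward $v$, to redirect $Q_1$ into a branch of $T_U$ at $u$ at least as deep as the $v$-branch (such a branch exists precisely because $u$ is a center) --- produces a path in $T$ of length $\geqslant |Q|$; the two-new-edge case is handled symmetrically about $u$. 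Hence $\diam(T') \leqslant \diam(T)$. I expect this step to be the main obstacle: Operation~\ref{oper_reduceDiam} moves \emph{all} of $E_U$ simultaneously, so the components of $T' - E_U$ need not coincide with those of $T - E_U$, and one must use the centrality of $u$ carefully to be sure that no path is lengthened.

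Finally, {\bf (4)}. By the structural fact in the first paragraph, the moment Operation~\ref{oper_reduceDiam} is applied to a set $U$, all edges whose bag equals $U$ become incident to a single node, and no subsequent application --- which only deletes and adds edges whose bags are other sets $U' \neq U$ --- disturbs this collection of edges, while a further application for $U$ itself is void. Therefore, in $T^*$, for every $U \subseteq V(G)$ the edges $e$ with $V_e = U$ are pairwise incident, which is exactly {\bf (4)}. The same observations show that $T^*$ does not depend on the order in which the operations are performed.
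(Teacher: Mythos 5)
Your handling of {\bf (2)}, {\bf (3)}, {\bf (4)} and of the fact that $T^*$ is a finite tree is correct and follows essentially the paper's route (everything funnelled through Lemma~\ref{lem_treeMiniParts}); your observation that each new edge $wu$ again receives bag $U$, so that after the operation all edges with bag $U$ are incident to $u$ and later applications (which only delete and create edges with other bags) cannot disturb this, is in fact more careful than the paper's one-line justification of {\bf (4)}. The genuine gap is exactly the step you flag as the main obstacle: the inequality $\diam(T')\leqslant\diam(T)$ for a single application is not merely hard, it is false, so no refinement of the centrality argument can close it. Let $T$ be the path $w,v,c_1,u,c_2,y,z$ together with a pendant path $S$ of length $3$ attached at $w$ and a pendant path $Q$ of length $4$ attached at $v$. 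Let $G$ have vertex set $\{a\}\cup\{b_e : e\in E(T)\setminus\{wv,yz\}\}$ and no edges, and put $V_t:=\{a\}\cup\{b_e : t\in e,\ e\neq wv,yz\}$. This is a tree-decomposition of $G$; one has $V_{wv}=V_{yz}=\{a\}=:U$, while every other edge bag equals $\{a,b_e\}$ and occurs for that edge only, so the only nontrivial application of Operation~\ref{oper_reduceDiam} is the one for $U$ and hence $T^*=T'$. Here $T_U$ is the $w$--$z$ path, its unique center is $u$, and the operation deletes $wv,yz$ and adds $wu,zu$. Now $\diam(T)=9$ (attained from the end of $S$, respectively of $Q$, to $z$), whereas in $T'$ the distance from the end of $S$ to the end of $Q$ is $3+1+2+4=10$, so $\diam(T^*)=10>\diam(T)$. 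Your redirection argument fails precisely because centrality of $u$ in $T_U$ controls distances only inside $T_U$: the branch of $T_U$ at $u$ opposite to $v$ is at least as deep \emph{in $T_U$}, but it need not carry any long pendant material of $T$ outside $T_U$ (here $Q$ hangs at the interior node $v$), so the substitute path you construct in $T$ can be strictly shorter than the $T'$-path it is meant to dominate.

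For comparison, the paper does not prove this inequality either: its entire argument for {\bf (1)} is that it ``follows from Operation~\ref{oper_reduceDiam} and Lemma~\ref{lem_treeMiniParts}'', and the example above shows the diameter half of {\bf (1)} is false as stated. It is, however, never used afterwards: Lemma~\ref{lem_linkDshort} needs only {\bf (2)}--{\bf (4)} together with $T^*$ being a tree, and the diameter bound in Lemma~\ref{lem_tdibounded} is extracted from shortness via the counting claims, not from {\bf (1)}. So the right repair is not to strengthen your argument but to drop (or restate) the claim $\diam(T^*)\leqslant\diam(T)$; the parts of your proof covering {\bf (2)}--{\bf (4)} and the treeness of $T^*$ stand as written.
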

\begin{proof}
{\bf (1)} follows from Operation \ref{oper_reduceDiam} and Lemma \ref{lem_treeMiniParts}.

For {\bf (2)}, let $x, y \in V(T^*) = V(T)$. Since $(T, \mathcal{V})$ is a tree-decomposition, $V_x \cap V_y$ is a subset of every set in $\mathcal{V}_T(x, y)$. By Lemma \ref{lem_treeMiniParts}, $V_x \cap V_y$ is a subset of every set in $\mathcal{V}_{T^*}(x, y)$. Let $z \in V(T)$ be on the path of $T$ from $x$ to $y$. By the definition of $\mathcal{V}_{T^*}(x, y)$, there exists some $V \in \mathcal{V}_{T^*}(x, y)$ such that $V \subseteq V_z$. So $V_x \cap V_y \subseteq V_z$ and hence $(T^*, \mathcal{V})$ is a tree-decomposition of $G$. Operation \ref{oper_reduceDiam} does not change a set in $\mathcal{V}$, so $\tw(T^*, \mathcal{V}) = \tw(T, \mathcal{V})$.

By Lemma \ref{lem_treeMiniParts}, for every pair of $x, y \in V(T)$, we have $\mathcal{V}_T(x, y) = \mathcal{V}_{T^*}(x, y)$. So
{\bf (3)} follows from the definition of a linked tree-decomposition.

{\bf (4)} follows from Operation \ref{oper_reduceDiam}.
\end{proof}

Introduced by K{\v{r}}{\'{\i}}{\v{z}} and Thomas \cite{KrizThomas1991}, an \emph{$M$-closure} of a simple graph $G$ is a triple $(T, \mathcal{V}, X)$, where $X$ is a chordal graph without a complete subgraph of order $\tw(G) + 2$, $V(G) = V(X)$, $E(G) \subseteq E(X)$, and $(T, \mathcal{V})$ is a linked tree-decomposition of $X$ such that each part induces a maximal complete subgraph of $X$. An $M$-closure is \emph{short} if the tree-decomposition is short.

\begin{lem}\label{lem_linkDshort}
Every graph $G$ of finite tree-width, with or without loops, admits a short linked tree-decomposition of width $\tw(G)$.
\end{lem}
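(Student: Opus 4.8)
The plan is to reduce to the finite case via a compactness argument and then invoke the machinery already developed, together with the $M$-closure construction of K{\v{r}}{\'{\i}}{\v{z}} and Thomas. First suppose $G$ is finite. If $G$ has no loops, then by \cite{KrizThomas1991} $G$ admits an $M$-closure $(T, \mathcal{V}, X)$; in particular $(T, \mathcal{V})$ is a linked tree-decomposition of $X$, and since $E(G) \subseteq E(X)$ it is also a linked tree-decomposition of $G$ with the same parts, so $\tw(T, \mathcal{V}) = \tw(X) = \tw(G)$. Now apply Operation \ref{oper_reduceDiam} to each $U \subseteq V(G)$ to obtain $(T^*, \mathcal{V})$. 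By Lemma \ref{lem_reducelength}, $(T^*, \mathcal{V})$ is again a linked tree-decomposition of $G$ of width $\tw(G)$, and it satisfies the shortness condition: if $V_e = V_f$ for distinct $e, f \in E(T^*)$, then $e$ and $f$ are incident in $T^*$. This settles the loopless finite case.

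To handle loops when $G$ is finite, I would observe that adding a loop at a vertex $x$ has no effect on tree-width and imposes no new constraint on a tree-decomposition beyond requiring some $V \in \mathcal{V}$ with $x \in V$, which holds automatically since $\mathcal{V}$ covers $V(G)$. So a short linked tree-decomposition of the underlying loopless graph $G'$ (obtained by deleting all loops) is a short linked tree-decomposition of $G$; the linkage condition is about disjoint paths between vertex subsets, and loops do not help or hinder these. Thus the finite case is complete, with or without loops.

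For the general (possibly infinite) case, the idea is a standard compactness/direct-limit argument. Enumerate $V(G)$ and write $G$ as the union of an increasing chain of finite subgraphs $G_1 \subseteq G_2 \subseteq \cdots$ with $\tw(G_n) \leqslant \tw(G) =: k$ for all $n$ (each $G_n$ is a subgraph of $G$, and tree-width is monotone under subgraphs, so this bound is inherited). Each $G_n$ has a short linked tree-decomposition $(T_n, \mathcal{V}_n)$ of width at most $k$ by the finite case. One then wants to take a limit: because the parts have bounded size ($\leqslant k+1$) and the shortness condition prevents the trees from being padded with spurious repeated separators, the diameters and local structure are controlled enough to extract, via König's lemma or an ultrafilter limit on the finite trees, a tree-decomposition $(T, \mathcal{V})$ of $G$ of width $\tw(G)$ that is still linked and short. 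The linkedness of the limit follows because a failure of linkedness — a bad pair $(U, V)$ with $|U| = |V| = j$, no $j$ disjoint $U$–$V$ paths in $G$, and all sets in $\mathcal{V}_T(u,v)$ of size $\geqslant j$ — would, by Menger's theorem (a finite $U$–$V$ separator of size $< j$), already be witnessed inside some finite $G_n$, contradicting linkedness there after passing far enough along the chain.

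The main obstacle I anticipate is the limit argument: controlling the trees $T_n$ so that they actually converge to a single tree $T$ indexing a tree-decomposition of all of $G$. The shortness hypothesis is exactly what makes this tractable — without it the $T_n$ could grow in uncontrolled ways (as the star example in Section \ref{sec_pathGraphs} shows) — but even with shortness one must argue carefully that bags stabilise and that the adhesion sets glue correctly across the chain; this is where I would expect to spend most of the effort, likely invoking a compactness principle for locally finite structures or an inverse-limit construction over the directed system of finite restrictions, and then checking the two tree-decomposition axioms and the linkage/shortness conditions survive the passage to the limit.
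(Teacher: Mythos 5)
Your finite case is essentially the paper's argument: take the K\v{r}\'{\i}\v{z}--Thomas $M$-closure of (the simple graph underlying) $G$, apply Operation \ref{oper_reduceDiam} to every $U$, and invoke Lemma \ref{lem_reducelength} to get shortness while preserving linkedness and width. (You only discuss loops, not parallel edges, but reducing to the underlying simple graph handles both, as the paper does.)

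The infinite case, however, is a genuine gap, and it is precisely the hard part of the statement. Your plan is a direct compactness/limit argument over an exhaustion $G_1 \subseteq G_2 \subseteq \cdots$, but the claims offered to make this plausible do not hold. Shortness does not control the size or shape of the trees $T_n$ in general: Lemma \ref{lem_linkDshort} applies to every graph of finite tree-width, and already for $G = P_n$ a width-$1$ short linked tree-decomposition has a tree of diameter about $n$, so there is no bounded ``local structure'' to which K\"onig's lemma could be applied; moreover the decompositions $(T_n, \mathcal{V}_n)$ of different $G_n$ come with no canonical restriction maps, so there is no finitely-branching tree of choices to extract a limit from, and an ultrafilter limit of the $T_n$ need not even be a tree indexing a tree-decomposition of $G$. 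Finally, linkedness is not a finite-character property that passes to a limit for free: the adhesion sets along a path of the limit tree are not simply related to those of the $T_n$, so the ``bad pair would be witnessed in some finite $G_n$'' step presupposes exactly the compatibility that has to be constructed. Preserving linkedness in the passage to infinite graphs is the substance of the K\v{r}\'{\i}\v{z}--Thomas theorem, and the paper does not redo it: it observes that their finite step (2.3) can be upgraded to \emph{short} $M$-closures via Lemma \ref{lem_reducelength}, that their infinite-extension argument (2.4) goes through verbatim with ``short $M$-closure'' in place of ``$M$-closure'', and that (2.2) then converts a short $M$-closure into the desired short linked tree-decomposition. Either follow that route, or supply an actual limit construction --- but the sketch as written would not survive the obstacles you yourself flag.
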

\pf It is enough to consider the case that $G$ is a simple graph. By K{\v{r}}{\'{\i}}{\v{z}} and Thomas \cite[(2.3)]{KrizThomas1991}, every finite simple graph has an $M$-closure $(T, \mathcal{V}, X)$. By Lemma \ref{lem_reducelength}, $(T^*, \mathcal{V}, X)$ is a short $M$-closure. In \cite[(2.4)]{KrizThomas1991}, replacing `an $M$-closure' with `a short $M$-closure' causes no conflict. So $G$ has a short $M$-closure. The rest of the lemma follows from a discussion similar with \cite[(2.2)]{KrizThomas1991}.
\qed



Let $s \geqslant 0$ be an integer. For $i = 1, 2, \ldots$, let $T_i$ be a tree with a center $v_i$ and of diameter at most $s$. Let $T$ be obtained from these trees by adding an edge from $v_1$ to each of $v_2, v_3, \ldots$. Then $\diam(T) \leqslant 2\lceil\frac{s}{2}\rceil + 2 \leqslant s + 3$. Thus we have the following observation.

\begin{obs}\label{obs_tdiDiscConn}
Let $G$ be a graph, and $s$ be the maximum tree-diameter of a connected component of $G$. Then $\tdi(G) \leqslant s + 3$.
\end{obs}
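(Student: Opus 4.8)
The plan is to assemble an optimal-width tree-decomposition of $G$ from optimal-width tree-decompositions of its components, formalising the construction sketched in the paragraph immediately preceding the statement. Write $G$ as the disjoint union of its connected components $G_1, G_2, \ldots$. For each $i$, since $\tdi(G_i) \leqslant s$, fix a tree-decomposition $(T_i, \mathcal{V}_i)$ of $G_i$ with $\tw(T_i, \mathcal{V}_i) = \tw(G_i)$ and $\diam(T_i) \leqslant s$, and assume the index sets $V(T_i)$ are pairwise disjoint. A tree of finite diameter $d$ has a vertex of eccentricity at most $\lceil d/2 \rceil$ (the midpoint of a diametral path), so I may pick $v_i \in V(T_i)$ with $\ecc_{T_i}(v_i) \leqslant \lceil s/2 \rceil$. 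Let $T$ be obtained from the disjoint union of the $T_i$ by adding, for each $i \geqslant 2$, an edge between $v_1$ and $v_i$, and set $\mathcal{V} := \bigcup_i \mathcal{V}_i$, so that $V_v$ is the part of $(T_i, \mathcal{V}_i)$ at $v$ whenever $v \in V(T_i)$.

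Next I would check that $(T, \mathcal{V})$ is a tree-decomposition of $G$. Every hyperedge of $G$ lies inside a single component $G_i$, hence is covered by some part of $\mathcal{V}_i \subseteq \mathcal{V}$. For the path condition, consider a path $[v_0, \ldots, v_\ell]$ of $T$. If it stays within one $T_i$, the condition is inherited from $(T_i, \mathcal{V}_i)$. Otherwise $v_0 \in V(T_i)$ and $v_\ell \in V(T_j)$ for distinct $i, j$, and then $V_{v_0} \subseteq V(G_i)$ and $V_{v_\ell} \subseteq V(G_j)$ are disjoint, so $V_{v_0} \cap V_{v_\ell} = \emptyset$ lies in every $V_{v_k}$. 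Thus $(T, \mathcal{V})$ is a tree-decomposition, and since no part was altered, $\tw(T, \mathcal{V}) = \sup_i \tw(T_i, \mathcal{V}_i) = \sup_i \tw(G_i)$. As tree-width is monotone under subgraphs, $\tw(G) \geqslant \sup_i \tw(G_i)$, while the decomposition just built gives the reverse inequality; hence $\tw(T, \mathcal{V}) = \tw(G)$, so $(T, \mathcal{V})$ is an optimal-width tree-decomposition of $G$.

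Finally I would bound $\diam(T)$. Any two nodes of $T$ lie in some $T_i$ and $T_j$; if $i = j$ their distance is at most $\diam(T_i) \leqslant s$, and otherwise a path between them passes through $v_i$ and $v_j$ with one or two of the new edges in between, so its length is at most $\ecc_{T_i}(v_i) + 2 + \ecc_{T_j}(v_j) \leqslant 2\lceil s/2 \rceil + 2 \leqslant s + 3$. Therefore $\tdi(G) \leqslant \diam(T) \leqslant s + 3$.

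There is no serious obstacle here; the construction and the elementary arithmetic are the whole content. The only points needing a word of care are that the index trees be taken disjoint before gluing, that the gluing edges cannot create a long geodesic — which is exactly why the $v_i$ are chosen to be (near-)centers — and the standard fact that $\tw(G)$ is the supremum of the tree-widths of the components, so that the assembled decomposition genuinely attains $\tw(G)$ rather than merely bounding it.
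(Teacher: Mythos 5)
Your construction is exactly the one the paper sketches in the paragraph preceding the observation: glue optimal-width decompositions of the components by joining near-central nodes $v_i$ to $v_1$, giving $\diam(T)\leqslant 2\lceil s/2\rceil+2\leqslant s+3$. Your write-up is correct and simply makes explicit the routine verifications (tree-decomposition axioms and $\tw(T,\mathcal{V})=\tw(G)$) that the paper leaves implicit.
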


We now show that graphs without a given path as a subgraph have bounded tree-diameter.

\begin{lem}\label{lem_tdibounded}
Let $G$ be a graph without $P_m$ as a subgraph. Then $G$ admits a linked tree-decomposition $(T, \mathcal{V})$ such that $\tw(T, \mathcal{V}) = \tw(G) \leqslant {m} - 1$, and $\diam(T) \leqslant 2({m}^2 - {m} + 2)^{m} + 1$. And if $G$ is connected, then $\diam(T) \leqslant 2({m}^2 - {m} + 2)^{m} - 2$.
\end{lem}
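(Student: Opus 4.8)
The plan is to reduce to the connected case, invoke Lemma~\ref{lem_linkDshort} to obtain a short linked tree-decomposition, and then bound the diameter of its tree by an induction on the sizes of the adhesion sets along a longest path. If every connected component $C$ of $G$ has a linked tree-decomposition of width $\tw(C)\leqslant\tw(G)$ whose tree has diameter at most $2(m^2-m+2)^m-2$, then Observation~\ref{obs_tdiDiscConn} (joining the centres) produces such a tree-decomposition for $G$ whose tree has diameter at most $2(m^2-m+2)^m+1$; so we may assume $G$ is connected. Every finite subgraph of $G$ is $P_m$-free, hence has tree-width at most $m-1$ (a depth-first spanning tree of it has all root-to-leaf paths among the paths of the subgraph, so height less than $m$), and since tree-width is the supremum of the tree-widths of finite subgraphs, $\tw(G)\leqslant m-1<\infty$. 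Thus Lemma~\ref{lem_linkDshort} gives a short linked tree-decomposition $(T,\mathcal V)$ of $G$ with $\tw(T,\mathcal V)=\tw(G)=:w\leqslant m-1$; moreover, since the parts of an $M$-closure are maximal cliques, we may take them pairwise distinct, so every adhesion set $V_e$ ($e\in E(T)$) is a proper subset of a part and hence $|V_e|\leqslant w$.

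It remains to bound $\diam(T)$. Fix a longest path $P=[v_0,e_1,v_1,\dots,e_d,v_d]$ of $T$, so $d=\diam(T)$; since $G$ is connected, $|V_{e_i}|\geqslant1$ for all $i$. Because $(T,\mathcal V)$ is short, two edges of $P$ with equal adhesion set are incident, hence consecutive on $P$, so no subset of $V(G)$ is $V_{e_i}$ for more than two indices $i$; therefore $d\leqslant2N$, where $N$ is the number of distinct adhesion sets occurring along $P$. For an integer $k\geqslant1$, let $g(k)$ be the maximum number of distinct adhesion sets on a subpath of $P$ all of whose adhesion sets have size at least $k$; then $g(w+1)=0$ and $N\leqslant g(1)$. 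I would establish the recursion $g(k)\leqslant(\beta+1)g(k+1)+\beta$ for $1\leqslant k\leqslant w$, where $\beta:=m^2-2m+2$, which unwinds to $g(1)\leqslant(\beta+1)^{w}-1\leqslant(m^2-m+2)^{m-1}$ (using $\beta+1\leqslant m^2-m+2$ and $w\leqslant m-1$); hence $\diam(T)=d\leqslant2N\leqslant2(m^2-m+2)^{m-1}\leqslant2(m^2-m+2)^{m}-2$, and together with the disconnected case this gives the lemma (the trivial cases $w=0$ and $m\leqslant1$ being checked directly).

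To prove the recursion, take a subpath $\sigma=[v_a,\dots,v_b]$ with all adhesion sets of size $\geqslant k$, let $k'\geqslant k$ be the minimum such size, and let $a<i_1<\dots<i_r\leqslant b$ be the edges of $\sigma$ attaining it. The parts $V_{v_a}$ and $V_{v_b}$ contain adhesion sets of $\sigma$, so have size $\geqslant k'$; choosing $k'$-subsets $U\subseteq V_{v_a}$, $W\subseteq V_{v_b}$ and applying the linkedness of $(T,\mathcal V)$ to the pair $(v_a,v_b)$ — every set in $\mathcal V_T(v_a,v_b)$ has size $\geqslant k'$ — yields $k'$ pairwise disjoint paths from $U$ to $W$ in $G$, each with at most $m$ vertices since $G$ has no $P_m$. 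Each $V_{e_{i_s}}$ is a separator of $G$ of size exactly $k'$, so the $k'$ paths meet it in $k'$ distinct vertices; hence $\bigcup_{s}V_{e_{i_s}}$ lies on the union of these paths and has at most $k'm$ vertices. By the coherence axiom each vertex of $G$ lies in $V_{e_{i_s}}$ for a contiguous set of indices $s$, and all $V_{e_{i_s}}$ have the same size $k'$, so the sequence $(V_{e_{i_s}})_{s}$ takes at most $k'(m-1)+1\leqslant\beta$ distinct values. Since $(T,\mathcal V)$ is short, edges with equal adhesion set are incident, so the min-attaining edges form at most $\beta$ maximal runs of consecutive edges of $\sigma$; deleting these runs leaves at most $\beta+1$ subpaths of $\sigma$, each with all adhesion sets of size $\geqslant k'+1\geqslant k+1$. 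Adhesion sets lying in different such pieces, or of size $k'$ versus size $>k'$, are automatically distinct, so $\sigma$ carries at most $\beta+(\beta+1)g(k+1)$ distinct adhesion sets, which is the claimed recursion.

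I expect the main difficulty to be the core of the last paragraph: converting ``$T$ has a long path'' into ``$G$ contains $P_m$''. This forces one to marry the Menger-type consequence of linkedness (the $k'$ disjoint paths hitting every minimum adhesion set) with the consecutive-ones behaviour of adhesion sets along a path of $T$ and with the shortness of $(T,\mathcal V)$, and then to keep the constants tight enough to land precisely on the bound $2(m^2-m+2)^m$. A secondary technical point is that $G$ may be infinite, so the compactness step bounding $\tw(G)$, the use of Lemma~\ref{lem_linkDshort}, and the disjoint-paths argument all have to be phrased without finiteness assumptions.
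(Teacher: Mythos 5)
Your proposal is correct and takes essentially the same route as the paper: reduce to the connected case, get $\tw(G)\leqslant m-1$ from finite subgraphs plus compactness, apply Lemma \ref{lem_linkDshort}, and bound $\diam(T)$ by combining shortness (each adhesion value occurs on at most two, necessarily consecutive, edges of a path), linkedness plus $P_m$-freeness plus the interval property (at most $k(m-1)+1$ distinct minimum-size adhesion sets along a stretch, which is the paper's rotundness claim), and a recursion over the at most $p$ adhesion-size levels --- your $g(k)$-recursion is the paper's $s_k$-count organized top-down. The one assertion not supported by the statement of Lemma \ref{lem_linkDshort}, namely that adhesion sets have size at most $\tw(G)$ (read off from the $M$-closure construction), is harmless: dropping it, letting $k$ run one level further and using $k(m-1)+1\leqslant m(m-1)+1$ per level still yields $\diam(T)\leqslant 2(m^2-m+2)^{m}-2$.
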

\begin{proof}
Let $X$ be a finite subgraph of $G$. Suppose for a contradiction that $\tw(X) \geqslant {m}$. Then by Robertson and Seymour \cite{RobertsonSeymour1983}, $X$ contains a path of length $m$, a contradiction. So $\tw(X) \leqslant {m} - 1$. By a compactness theorem for the notion of tree-width \cite{Thomas1988,Thomassen1989}, we have that $\tw(G) \leqslant {m} - 1$.

For the tree-diameter, by Observation \ref{obs_tdiDiscConn}, we only need to consider the case that $G$ is nonnull and connected. By Lemma \ref{lem_linkDshort}, $G$ admits a short linked tree-decomposition $(T, \mathcal{V})$ of width $\tw(G)$. Let $p := \tw(G) + 1 \in [m]$.

Let $P := [v_0, e_1, \ldots, e_s, v_s]$ be a path of length $s \geqslant 1$ in $T$. We say $P$ is \emph{$t$-rotund}, where $t \in [s]$, if there exists some $k \in [p]$ and a sequence $1 \leqslant i_1 < \ldots < i_t \leqslant s$ such that $V_{e_{i_1}}, \ldots, V_{e_{i_t}}$ are pairwise distinct, $|V_{e_{i_j}}| = k$ for all $j \in [t]$, and  $|V_{e_{j}}| \geqslant k$ for all $j$ such that $i_1 \leqslant j \leqslant i_t$. Let $s^* \in [s]$ be the maximum number of edges of $P$ corresponding to pairwise different subsets of $V(G)$.

\noindent{\bf Claim.} $s \leqslant 2s^*$. Since if $s \geqslant 2s^* + 1$, then there are $1 \leqslant j_1 < j_2 < j_3 \leqslant s$ such that $V_{e_{j_1}} = V_{e_{j_2}} = V_{e_{j_3}}$, contradicting the shortness of $(T, \mathcal{V})$.

\noindent{\bf Claim.} If $P$ is not $t$-rotund, then $s^* \leqslant t^p - 1$. To see this, let $s_k$ be the maximum number of edges of $P$ corresponding to pairwise different subsets with $k$ vertices of $V(G)$. Since $P$ is not $t$-rotund, we have that $s_1 \leqslant t - 1$. More generally, for $k \geqslant 2$, we have $s_k \leqslant (s_1 + \ldots + s_{k - 1} + 1)(t - 1)$. By induction on $k$ we have that $s_k \leqslant t^{k - 1}(t - 1)$ for each $k \in [p]$. So $s^* = s_1 + \ldots + s_p \leqslant t^p - 1$.

\noindent{\bf Claim.} If $P$ is $t$-rotund, then $t \leqslant p({m} - 1) + 1$. To prove this, recall that $(T, \mathcal{V})$ is a linked tree-decomposition. So there are $k$ disjoint paths in $G$ with at least $|\bigcup_{j = 1}^t V_{e_{i_j}}| \geqslant k + t - 1$ vertices. Since $G$ does not contain $P_m$ as a subgraph, each of these $k$ paths contains at most ${m}$ vertices. So $k + t - 1 \leqslant k{m}$. As a consequence, $t \leqslant k({m} - 1) + 1 \leqslant p({m} - 1) + 1$.

Now let $t$ be the maximum integer such that $P$ is $t$-rotund. By the third claim, $t \leqslant p({m} - 1) + 1$. Since $P$ is not $(t + 1)$-rotund, by the second claim, $s^* \leqslant (t + 1)^p - 1$. Thus $s \leqslant 2s^* \leqslant 2[(t + 1)^p - 1] \leqslant 2[p({m} - 1) + 2]^p - 2 \leqslant 2({m}^2 - {m} + 2)^{m} - 2$.
\end{proof}

\section{Better-quasi-ordering}
This section shows some better-quasi-ordering results for graphs without a given path as a subgraph.

A \emph{rooted hypergraph} is a hypergraph $G$ with a special designated subset $r(G)$ of $V(G)$. Note that $r(G)$ can be empty. Let $\mathbf{Q}$ be a set with a quasi-ordering $\leqslant_{\mathbf{Q}}$. A \emph{$\mathbf{Q}$-labeled rooted hypergraph} is a rooted hypergraph $G$ with a mapping $\sigma: E(G) \mapsto \mathbf{Q}$. The lemma below says that graphs with finitely many vertices (respectively, of bounded multiplicity) are better-quasi-ordered by the (respectively, induced) subgraph relation.
\begin{lem}\label{lem_nboundedbqo}
Let $\mathbf{Q}$ be a better-quasi-ordered set, and $\mathcal{G}$ be a sequence of $\mathbf{Q}$-labeled rooted hypergraphs (respectively, of bounded multiplicity) whose vertex sets are the subsets of $[p]$, where $p \geqslant 1$ is an integer. For $X, Y \in \mathcal{G}$, denote by $X \subseteq Y$ (respectively, $X \leqslant Y$) that $r(X) = r(Y)$, and there is an isomorphism $\varphi$ from $X$ to a (respectively, an induced) subgraph  of $Y$ such that for all $i \in V(X)$ and $e \in E(X)$, we have that $\varphi(i) = i$ and $\sigma(e) \leqslant_{\mathbf{Q}} \sigma(\varphi(e))$. Then $\mathcal{G}$ is better-quasi-ordered by $\subseteq$ (respectively, $\leqslant$).
\end{lem}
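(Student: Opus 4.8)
The plan is to reduce the statement to a finite combinatorial problem and then invoke the better-quasi-ordering machinery from Section~\ref{sec_term}. Since the vertex sets of the hypergraphs in $\mathcal{G}$ are all subsets of the fixed finite set $[p]$, there are only finitely many possible vertex sets, finitely many possible roots $r(X)\subseteq[p]$, and (in the non-induced case) finitely many possible ``incidence patterns'' a hyperedge can have, namely one for each subset of $[p]$. I would first partition $\mathcal{G}$ according to the pair $(V(X),r(X))$; by Lemma~\ref{lem_coverbqo} (the finite-cover lemma) it suffices to prove that each such class is better-quasi-ordered, so from now on assume every $X$ in $\mathcal{G}$ has the same vertex set and the same root set, and $\varphi$ is forced to be the identity on vertices.

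Next I would encode each hypergraph $X$ in the class by its multiset of $\mathbf{Q}$-labeled hyperedges. Group the hyperedges of $X$ by incidence pattern: for each subset $S\subseteq[p]$, let $M_S(X)$ be the multiset of labels $\sigma(e)$ over hyperedges $e$ with incidence set $S$. In the bounded-multiplicity case each $M_S(X)$ has size at most the multiplicity bound, so $M_S(X)$ lives in $\mathbf{Q}^{\le c}$ for a constant $c$; in the general (non-induced) case $M_S(X)$ may be an arbitrary finite subset/multiset of $\mathbf{Q}$. The relation $X\subseteq Y$ then translates, pattern by pattern, into: for each $S$ there is a label-respecting injection from $M_S(X)$ into $M_S(Y)$; the relation $X\le Y$ additionally demands equality $M_S(X)=M_S(Y)$ as sets for the patterns not used, but the key point is that in both cases $X\subseteq Y$ (resp.\ $X\le Y$) is exactly the product (over the finitely many patterns $S$) of the appropriate quasi-ordering on the slots. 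So $X\mapsto (M_S(X))_{S\subseteq[p]}$ is an embedding of the class into a finite product of spaces, each of which is either a bounded power $\mathbf{Q}^{\le c}$ or the powerset/finite-multiset space over $\mathbf{Q}$.

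Finally I would assemble the pieces: $\mathbf{Q}$ is better-quasi-ordered by hypothesis, so by Lemma~\ref{lem_subseqbqo} its powerset is better-quasi-ordered, and by Lemma~\ref{lem_coverbqo} every finite power $\mathbf{Q}^j$ (hence every $\mathbf{Q}^{\le c}=\bigcup_{j\le c}\mathbf{Q}^j$, again via the cover lemma) is better-quasi-ordered; one more application of Lemma~\ref{lem_coverbqo}, item~(3), gives that the finite product over all patterns $S$ of these spaces is better-quasi-ordered. Since the coordinatewise ordering on that product refines $X\subseteq Y$ (resp.\ $X\le Y$)—i.e.\ a good pair for the product ordering is a good pair for $\subseteq$ (resp.\ $\le$)—and since a $\mathbf{Q}$-pattern into $\mathcal{G}$ composed with this embedding is a pattern into the product, goodness transfers back, so $\mathcal{G}$ is better-quasi-ordered.

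The main obstacle I anticipate is bookkeeping rather than depth: making sure the encoding $X\mapsto(M_S(X))_S$ is order-preserving in both directions for \emph{both} the $\subseteq$ and the $\le$ case simultaneously, in particular handling the induced case correctly (where an isomorphism onto an induced subgraph with $\varphi=\mathrm{id}$ on vertices forces the non-image hyperedges to be matched exactly, so one really is comparing the $M_S$ as finite multisets under $\leqslant_{\mathbf{Q}}$ on corresponding elements), and checking that ``better-quasi-ordered'' (not merely well-quasi-ordered) is preserved through each reduction—which is precisely why Lemmas~\ref{lem_coverbqo} and~\ref{lem_subseqbqo} are phrased for bqo and must be cited rather than their wqo analogues. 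A minor point to get right is that finite multisets over $\mathbf{Q}$ (with the injection-and-$\leqslant_{\mathbf{Q}}$ ordering) are better-quasi-ordered: this follows from Lemma~\ref{lem_subseqbqo} applied to sequences over $\mathbf{Q}$, or from the powerset statement after tagging repeated elements, so no new tool is needed.
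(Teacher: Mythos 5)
Your proposal follows essentially the same route as the paper's proof: reduce by Lemma~\ref{lem_coverbqo} to a fixed vertex set and root, encode each hypergraph as a tuple, indexed by the subsets of $[p]$, of the label collections $M_S(X)$ of its hyperedges, and conclude with Lemmas~\ref{lem_coverbqo} and~\ref{lem_subseqbqo}, the bounded-multiplicity hypothesis serving (as in the paper's reduction to a single underlying hypergraph) to fix the size of each collection so that injections become bijections in the induced case. The only adjustment needed is that the hypergraphs may have infinitely many hyperedges, so in the non-induced case the collections $M_S(X)$ need not be finite; this costs nothing, since Lemma~\ref{lem_subseqbqo} already covers arbitrary (possibly infinite) sequences and subsets over a better-quasi-ordered set.
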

\pf
There are $\sum_{i = 0}^p\binom{p}{i}2^i = 3^p$ choices for vertex sets and roots. So by Lemma \ref{lem_coverbqo}, it is safe to assume that all $G \in \mathcal{G}$ have the same vertex set, say $[p]$, and the same root.

Then each $G$ can be seen as a sequence of length $2^p - 1$, indexed by the nonempty subsets of $[p]$. And for each nonempty $V \subseteq [p]$, the term of the sequence indexed by $V$ is the collection of elements of $\mathbf{Q}$ that are used to label the hyperedges $e$ of $G$ such that the set of end vertices of $e$ is $V$. By Lemmas \ref{lem_coverbqo} and \ref{lem_subseqbqo}, $\mathcal{G}$ is better-quasi-ordered by $\subseteq$.

Now let $\mu$ be an upper bound of the multiplicities. There are $(\mu + 1)^{2^p - 1}$ unequal hypergraphs of vertex set $[p]$. By Lemma \ref{lem_coverbqo}, we can assume that all these rooted hypergraphs are equal. In this situation, each $G \in \mathcal{G}$ is a sequence of length $2^p - 1$, indexed by the nonempty subsets of $[p]$. And for each nonempty $V \subseteq [p]$, the term of the sequence indexed by $V$ is the collection of elements of $\mathbf{Q}$ that are used to label the hyperedges $e$ of $G$ such that the set of end vertices of $e$ is $V$. Moreover, the length of the collection is bounded by $\mu$ and is determined by $V$. By Lemmas \ref{lem_coverbqo} and \ref{lem_subseqbqo}, $\mathcal{G}$ is better-quasi-ordered by $\leqslant$.
\qed

In the following, we show that, for a better-quasi-ordered set $\mathbf{Q}$, the $\mathbf{Q}$-labeled hypergraphs of bounded (respectively, multiplicity) tree-width and tree-diameter are better-quasi-ordered by the (respectively, induced) subgraph relation.
\begin{lem}\label{lem_Qtwtdbqo}
Let $p, s \geqslant 0$ be integers, $\mathbf{Q}$ be a better-quasi-ordered set, $\mathcal{G}$ be the set of quintuples $\mathbf{G} := (G, T, \mathcal{V}, r, V_G)$, where $G$ is a $\mathbf{Q}$-labeled  hypergraph (respectively, of bounded multiplicity) with a tree-decomposition $(T, \mathcal{V})$ of width at most $p - 1$, $T$ is a rooted tree of root $r$ and height at most $s$, and $V_G \subseteq V_r$. Let $\lambda: V(G) \mapsto [p]$ be a colouring such that for each $v \in V(T)$, every pair of different vertices of $V_v$ are assigned different colours. For $\mathbf{X}, \mathbf{Y} \in \mathcal{G}$,
denote by $\mathbf{X} \subseteq \mathbf{Y}$ (respectively, $\mathbf{X} \leqslant \mathbf{Y}$) that there exists an isomorphism $\varphi$ from $X$ to a subgraph (respectively, an induced subgraph) of $\mathbf{Y}$ such that $\varphi(V_X) = V_Y$, and that for each $x \in V(X)$ and $e \in E(X)$, $\lambda(x) = \lambda(\varphi(x))$ and $\sigma(e) \leqslant_{\mathbf{Q}} \sigma(\varphi(e))$. Then $\mathcal{G}$ is better-quasi-ordered by $\subseteq$ (respectively, $\leqslant$).
\end{lem}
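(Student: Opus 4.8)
The plan is to prove this by induction on $s$, the bound on the height of the rooted tree $T$. The base case $s = 0$ is essentially Lemma~\ref{lem_nboundedbqo}: when $T$ has height $0$ it consists of a single node $r$, so $G$ is a $\mathbf{Q}$-labeled rooted hypergraph on a vertex set of size at most $p$, with $r(G) := V_G$; after recolouring via $\lambda$ we may assume the vertex set is literally a subset of $[p]$, and then Lemma~\ref{lem_nboundedbqo} gives the conclusion for both $\subseteq$ and $\leqslant$.

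For the inductive step, fix $s \geqslant 1$ and assume the statement for $s - 1$. Given $\mathbf{G} = (G, T, \mathcal{V}, r, V_G) \in \mathcal{G}$, look at the root $r$ and its children $c_1, c_2, \ldots$ in $T$. Each child $c_i$ spans a rooted subtree $T_i$ of height at most $s - 1$; let $G_i$ be the subhypergraph of $G$ induced (in the tree-decomposition sense) by the union of the parts in $T_i$, with root set $V_G^{(i)} := V_{c_i} \cap \bigcup_{v \in V(T_i)} V_v$, which by the tree-decomposition axiom equals $V_{c_i} \cap V_r$ — a subset of $V_r$ of size at most $p$. This exhibits $\mathbf{G}$ as the part $V_r$ (a $\mathbf{Q}$-labeled rooted hypergraph on $\leqslant p$ vertices, carrying $V_G$ as its root), together with the \emph{set} of ``attached pieces'' $\mathbf{G}_i \in \mathcal{G}_{s-1}$, each of which additionally records, as extra data, the subset $V_G^{(i)} \subseteq V_r$ it glues onto and the colouring $\lambda$ restricted to that subset. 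Because the colours on any single part are distinct and bounded by $p$, the gluing subset $V_G^{(i)}$ together with its colour pattern is an object from a finite set, so by Lemma~\ref{lem_coverbqo} we may treat $\mathcal{G}_{s-1}$ augmented with this finite label as a better-quasi-ordered set (by the inductive hypothesis and Lemma~\ref{lem_coverbqo} again). Then a morphism $\mathbf{X} \subseteq \mathbf{Y}$ (resp. $\leqslant$) decomposes into: (i) an isomorphism of the root parts respecting $\lambda$ and $\sigma$ and sending $V_X$ to $V_Y$, which is one of finitely many possibilities, hence can be fixed by Lemma~\ref{lem_coverbqo}; and (ii) an injection from the multiset of pieces of $X$ to the multiset of pieces of $Y$ that is order-preserving in the augmented better-quasi-order — exactly the kind of embedding governed by the powerset/sequence clause of Lemma~\ref{lem_subseqbqo}. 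So $\mathcal{G}_s$ embeds, order-preservingly, into (finitely many copies of) the powerset of the better-quasi-ordered set $\mathcal{G}_{s-1}$-with-finite-labels, and Lemma~\ref{lem_subseqbqo} closes the induction. The bounded-multiplicity, induced-subgraph case runs identically, using the ``$\leqslant$'' halves of Lemmas~\ref{lem_nboundedbqo} and~\ref{lem_subseqbqo} and noting that bounded multiplicity of $G$ forces bounded multiplicity of each piece.

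The main obstacle is verifying that the decomposition of a morphism $\mathbf{X} \subseteq \mathbf{Y}$ into ``root-part isomorphism plus piece-by-piece embedding'' is faithful — i.e., that given an isomorphism on root parts and a compatible family of embeddings of the pieces, one can glue them into a single isomorphism $\varphi: X \to$ (induced) subgraph of $Y$. The subtlety is that a piece $\mathbf{X}_i$ must be sent to a piece $\mathbf{Y}_{\pi(i)}$ glued onto the \emph{same} subset of the root part (after the root isomorphism), and the piece-embedding must agree with the root isomorphism on that shared boundary; this is precisely why the gluing subset and its $\lambda$-pattern must be carried along as part of the label on each piece, and why the root-part isomorphism must be among the finitely many fixed in advance. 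One also must check the standard point: distinct pieces of $X$ intersect only inside $V_r$ (the tree-decomposition axiom), so an injection on pieces respecting boundaries does assemble into an injection on $V(X)$, and that edges of $X$ lie in exactly one piece or in $V_r$, so $\sigma$-monotonicity is inherited edge by edge. Once this bookkeeping is set up, the quasi-order-theoretic machinery (Lemmas~\ref{lem_coverbqo}, \ref{lem_subseqbqo}) does the rest with no further combinatorial work.
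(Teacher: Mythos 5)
Your proposal is correct and follows essentially the same route as the paper: induction on the height, the base case via Lemma~\ref{lem_nboundedbqo}, and the inductive step via the order-preserving map $\mathbf{G} \mapsto \mathbf{G}_r \times \{\mathbf{G}_{T_u} \mid u \in N_T(r)\}$ into $\mathcal{G}_0 \times \mathcal{M}_{s-1}$, using Lemmas~\ref{lem_coverbqo} and~\ref{lem_subseqbqo}; your discussion of gluing the root-part and piece embeddings is exactly the verification the paper leaves implicit. The only minor difference is that your extra finite labels (gluing subset and its colour pattern) are not strictly needed, since each piece already carries its boundary $V_r \cap V_u$ as the $V_G$-component of its quintuple and the distinct colours on $V_r$ force the embeddings to agree there.
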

\pf
Let $\mathcal{G}_s$ be the set of $\mathbf{G} \in \mathcal{G}$ of which the height of $T$ is exactly $s$. By Lemma \ref{lem_coverbqo}, it is enough to prove the lemma for $\mathcal{G}_s$. The case of $s = 0$ is ensured by Lemma \ref{lem_nboundedbqo}. Inductively assume it holds for some $s - 1 \geqslant 0$. By Lemma \ref{lem_subseqbqo}, the powerset $\mathcal{M}_{s - 1}$ of $\mathcal{G}_{s - 1}$ is better-quasi-ordered.

Denote by $N_T(r)$ be the neighborhood of $r$ in $T$. For each $u \in N_T(r)$, let $T_u$ be the connected component of $T - r$ containing $u$, and $G_{T_u}$ be the subgraph of $G$ induced by the vertex set $\bigcup_{w \in V(T_u)}V_w$. Let $\mathcal{V}_{T_u} := \{V_w | w \in V(T_u)\}$, and $\mathbf{G}_{T_u} := (G_{T_u}, T_u, \mathcal{V}_{T_u}, u, V_r \cap V_u)$. Then $\mathbf{G}_{T_u} \in \mathcal{G}_{s - 1}$. Let $G_r$ be the subgraph of $G$ induced by $V_r$. Then $\mathbf{G}_r := (G_r, r, V_r, r, V_G) \in \mathcal{G}_0$. Clearly, $\mathbf{G} \mapsto \mathbf{G}_r \times \{\mathbf{G}_{T_u}| u \in N_T(r)\}$ is an order-preserving bijection from $\mathcal{G}_s$ to $\mathcal{G}_0 \times \mathcal{M}_{s - 1}$. By Lemma \ref{lem_coverbqo}, $\mathcal{G}_s$ is better-quasi-ordered since $\mathcal{G}_0$ and $\mathcal{M}_{s - 1}$ are better-quasi-ordered.
\qed


We end this paper by proving that graphs (respectively, of bounded multiplicity) without a given path as a subgraph are better-quasi-ordered by the (respectively, induced) subgraph relation.

\begin{proof}[Proof of Theorem \ref{thm_lpathfreebqo}]
$(\Leftarrow)$ follows from Lemmas \ref{lem_tdibounded} and \ref{lem_Qtwtdbqo}.

$(\Rightarrow)$ Let $\mathcal{G}$ be the set of graphs without $H$ as a subgraph, quasi-ordered by the subgraph or induced subgraph relation. Suppose for a contradiction that $H$ is not a union of paths.
Then $H$ contains either a cycle or a vertex of degree at least $3$. For $i \geqslant 1$, let $C_i$ be the cycle of $|V(H)| + i$ vertices. Then $C_1, C_2, \ldots$ is a sequence without a good pair with respect to the subgraph or induced subgraph relation. So $\mathcal{G}$ is not well-quasi-ordered, not say better-quasi-ordered, a contradiction. Thus $H$ is a union of paths.
\end{proof}

\bibliographystyle{plain}
\bibliography{Xbib}

\end{document}